\theoremstyle{plain}
\newtheorem{theorem}{Theorem}[section]
\newtheorem{prop}[theorem]{Proposition}
\theoremstyle{definition}
\newtheorem{definition}{Definition}[section]
\newtheorem{remark}{\textnormal{\textbf{Remark}}}
\theoremstyle{remark}
\numberwithin{equation}{section}
\begin{document}
\title[On a class of paracontact metric 3-manifolds]
{On a class of paracontact metric 3-manifolds}
\author[K. Srivastava \and S. K. Srivastava]
{K. Srivastava*\and S. K. Srivastava**}

\newcommand{\acr}{\newline\indent}

\address{\llap{*\,}Department of Mathematics\acr
                    D. D. U. Gorakhpur University\acr
                    Gorakhpur-273009\acr
                    Uttar Pradesh\acr
                    INDIA}
\email{ksriddu22@gmail.com}

\address{\llap{**\,}Department of Mathematics\acr
                   Central University of Himachal Pradesh\acr
                   Dharamshala - 176215\acr
                   Himachal Pradesh\acr
	       INDIA}
\email{sachink.ddumath@gmail.com}

\subjclass[2010]{53D15, 53C25}
\keywords{Almost paracontact metric manifold, curvature, Einstein manifold}

\begin{abstract}
The purpose of this paper is to classify paracontact metric $3$-manifolds $M^3$ such that the Ricci operator $S$ commutes with the endomorhism $\phi$ of its tangent bundle $\Gamma(TM^3)$.
\end{abstract}

\maketitle

\section{Introduction}
Almost paracontact metric manifolds are the well known illustrations of almost para-CR manifolds. In \cite{skm} Kaneyuki and Kozai defined the almost paracontact structure on pseudo-Riemannian manifold $M^{2n+1}$ and constructed the almost paracomplex structure on $M^{2n+1}\times \mathbb{R}$. Analogous to Blair \emph{et al.} \cite{de1990} if the paracontact metric manifold $(M^{2n+1},\phi, \xi, \eta, g)$ is $\eta$-Einstein we do not have a widespread classification. This paper is organized as follows. In \S $2$, we present some technical apparatus which is needed for further investigations. In \S $3$, we first prove that for any $X$ orthogonal to $\xi$ the function $Trl$ vanishes and the function $f$ defined by $lX=fX$ is constant everywhere on a paracontact metric manifold $M^3$. We then show that the conditions, (i) the structure is $\eta$-Einstein, (ii) Ricci operator $S$ commutes with tensor field $\phi$  and (iii) $\xi$ belongs to the $k$-nullity distribution of pseudo-Riemannian metric $g$ are equivalent on $M^3$. Finally, we prove that the unit torse forming vector field in this manifold with $S\phi =\phi S$ is concircular in \S $4$. 
\section{Preliminaries}

A $C^{\infty}$ smooth manifold $M^{2n+1}$ of dimension $(2n+1)$, is said to have a triplet $(\phi, \xi, \eta)$-structure, if it admits a $(1, 1)$ tensor field $\phi$, a unique vector field $\xi$ called the characteristic vector field or Reeb vector field and a $1$-form $\eta$ satisfying: 
\begin{align}\label{eta}
\phi^2 = I -\eta\otimes \xi\,\,\, {\rm\and}\,\,\,\,  \eta\left(\xi\right)=1
\end{align}
where $I$ is the identity transformation. The endomorphism $\phi$ induces an almost paracomplex structure on each fibre of $D=ker(\eta)$, the contact subbundle that is the eigen distributions  $D^{\pm 1}$ corresponding to the characteristics values $\pm 1$ of $\phi$ have equal dimension $n$. \\
From the equation (\ref{eta}), it can be easily deduce that 
\begin{eqnarray}\label{phixi}
\phi\xi = 0, && \eta\,o\,\phi = 0 \,\,\,\,{\rm and \,\,\,\, rank}(\phi) = 2n.
\end{eqnarray}
This triplet structure $(\phi, \xi, \eta)$ is called an almost paracontact structure and the manifold $M^{2n+1}$ equipped with the $(\phi, \xi, \eta)$-structure is called an almost paracontact manifold \cite{skm}. If an almost paracontact manifold admits a pseudo-Riemannian metric \cite{sz}, $g$ satisfying: 
\begin{align}\label{gphi}
g\left(\phi X, \phi Y\right) = -g(X, Y) + \eta(X)\eta(Y)
\end{align}
where signature of $g$ is necessarily $(n+1, n)$ for any vector fields  $X$ and $Y$. Then the quadruple-$(\phi, \xi, \eta, g)$ is called an almost paracontact metric structure and the manifold $M^{2n+1}$ equipped with paracontact metric structure is called an almost paracontact metric manifold. With respect to $g$, $\eta$ is metrically dual to $\xi$, that is
\begin{align}\label{gx}
g(X, \xi) = \eta (X) 
\end{align}
Also, equation (\ref{gphi}) implies that
\begin{align}\label{gphix}
g(\phi X, Y) = -g(X, \phi Y).
\end{align}
{\it Note}: The above metric $g$ is, of course, not unique.\\
 Further, in addition to the above properties, if the quadruple-$(\phi, \xi, \eta, g)$ satisfies:\\
\begin{align}\label{detaxy}
d\eta(X, Y)= g(X, \phi Y),
\nonumber\end{align} 
for all vector fields $X,\,Y$ on $M^{2n+1}$, then the manifold is called a paracontact metric manifold and the corresponding structure-$(\phi, \xi, \eta, g)$ is called a paracontact structure with the associated metric $g$ \cite{sz}. 

Now, for an almost paracontact metric manifold, there always exists a special kind of local pseudo-orthonormal basis $\left\{X_{i}, X_{i^*}, \xi\right\}$;  where $X_{i^*}=\phi X_{i}$; $\xi$ and $X_{i}$'s are space-like vector fields and $X_{i^*}$'s  are time-like. Such a basis is called a $\phi$-basis. Indicating by $L$ and $R$, the Lie differentiation operator and the curvature tensor of $M^{2n+1}$ respectively, let us define, $(1,1)$ type tensor fields $h$ and $l$, which are symmetric as well, by $h= \frac{1}{2}L_{\xi}\phi,\,\,\, l=R(.,\,\xi)\xi$. The basic properties followed by $h$ and $l$ are: 
\begin{equation}\label{hxi}
h\xi=0,\,\, l\xi=0,\,\, \eta\circ h=0,\,\, Tr. h = Tr. \phi h = 0
\end{equation}
and
\begin{align}\label{hphi}
h\phi=-\phi h\,\,{\rm({\it i.e.},\,\, {\it h}\,\, anti-commutes\,\, with\,\, \phi).}
\end{align}

\noindent Also, $hX=\lambda X\implies h\phi X=-\lambda\phi X$, {\it i.e.}, if $\lambda$ is an eigen value of $h$ with the corresponding eigen vector $X$, then $-\lambda$ is also an eigen value of $h$ corresponding to the eigen vector $\phi X$. If $S$ denotes the Ricci operator and $\nabla$ denotes the Levi-Civita connection of the metric $g$, then using the above properties of $h$ and $l$, we can easily calculate the following formulas for a paracontact metric manifold $M^{2n+1}$:
\begin{equation}\label{nablax}
\nabla_{X}\xi = -\phi X + \phi hX,\,\, \nabla_{\xi}\xi=0.\end{equation}
\begin{equation}\label{nablaxi}
\nabla_{\xi}\phi=0.\end{equation}
\begin{equation}\label{Trl}
Trl=g(S\xi, \xi)=Trh^2-2n.\end{equation}
\begin{equation}\label{phil}
\phi l\phi + l= 2\left(h^{2}-\phi^{2}\right).\end{equation}
\begin{equation}\label{nablaxih}
\nabla_{\xi}h=-\phi-\phi l+h^{2}\phi.\end{equation}
If the Reeb vector field $\xi$ is Killing, {\emph i.e.,} equivalently $h=0$, then the paracontact metric manifold $M^{2n+1}$ is called a $K$-paracontact manifold \cite{gn}. On a $3$-dimensional pseudo-Riemannian manifold, since the conformal curvature tensor vanishes identically, therefore the curvature tensor $R$ takes the form \cite{irken} 
\begin{align}\label{rxyz}
R(X,Y)Z=&g(Y,Z)SX-g(X,Z)SY+g(SY,Z)X\nonumber\\
-&g(SX,Z)Y-\frac{r}{2}\left\{g(Y,Z)X-g(X,Z)Y\right\}
\end{align}
where $r$ is the scalar curvature of the manifold and the Ricci operator $S$ is defined by
\begin{equation}
g(SX,Y)=Ric(X,Y).
\end{equation}
where $Ric$ is the Ricci tensor.

\section{$k$-nullity distribution}
In contact geometry, the notion of $k$-nullity distribution is introduced by Tanno (1988, \cite{st}). The $k$-nullity distribution of a Riemannian manifold $(M, g)$, for a real number $k$, is a distribution
\begin{equation}\label{nk}
N(k):p\rightarrow N_{p}(k)=\left [ Z\in \Gamma(T_{p}M): R(X,Y)Z = k\left\{g(Y,Z)X-g(X,Z)Y\right\}\right]
\end{equation}
for any $X,\,Y\in \Gamma(T_{p}M)$, where $R$ and $\Gamma(T_{p}M)$ respectively denotes the curvature tensor and the tangent vector space of $M^{2n+1}$ at any point $p\in M$. If the characteristic vector field $\xi$ of a paracontact metric manifold belongs to the $k$-nullity distribution then the following relation holds \cite{st}
\begin{equation}\label{rxyxi}
R(X,Y)\xi=k(\eta(Y)X-\eta(X)Y).
\end{equation}
{\begin{definition}A paracontact metric manifold is said to be $\eta$-Einstein \cite{de203}, if the Ricci operator $S$ can be written in the following form:
\begin{equation}
S=aI+b\eta\otimes\xi,
\end{equation}
where $a$ and $b$ are some functions.\end{definition}
\noindent Now we will prove the following:
\begin{theorem}\label{trl}
Let $M^{3}(\phi, \xi, \eta, g)$ be a paracontact metric manifold with $\phi S=S\phi$. Then for any $X\in\Gamma(TM^3)$ orthogonal to $\xi$
\begin{enumerate}
\item[\rm (i)]the function $Trl$ vanishes on $M^{3}$ and

\item[\rm (ii)] the function $f$ defined by $lX=f X$ is constant everywhere on $M^{3}$.

\end{enumerate}

\end{theorem}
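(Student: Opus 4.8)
The plan is to exploit $\phi S=S\phi$ together with the fact that in dimension three the whole curvature tensor is governed by the Ricci operator through $(\ref{rxyz})$. First apply $\phi S=S\phi$ to $\xi$: since $\phi\xi=0$ and $\ker\phi=\RR\xi$ by $(\ref{eta})$–$(\ref{phixi})$, we get $\phi(S\xi)=0$, hence $S\xi=g(S\xi,\xi)\xi=(\operatorname{Tr}l)\,\xi$ by $(\ref{Trl})$. Next, working pointwise in a $\phi$-basis $\{X_1,\phi X_1,\xi\}$, the restriction of $S$ to $D=\ker\eta$ is $g$-symmetric, commutes with $\phi$, and $\phi^2=I$ on $D$; an elementary $2\times 2$ computation then forces $S|_D=aI$ for a smooth function $a$, so $S=aI+b\,\eta\otimes\xi$ with $a+b=\operatorname{Tr}l$ (in particular $M^3$ is $\eta$-Einstein and $r=\operatorname{Tr}S=2a+\operatorname{Tr}l$). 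Plugging $Y=Z=\xi$ into $(\ref{rxyz})$ and using $S\xi=(\operatorname{Tr}l)\xi$ gives, for all $X\perp\xi$,
\[ lX=R(X,\xi)\xi=\Bigl(a+\operatorname{Tr}l-\tfrac r2\Bigr)X=\tfrac12(\operatorname{Tr}l)\,X . \]
Hence $l$ preserves $D$ and acts there as a scalar, so the function $f$ of part (ii) is well defined with $\operatorname{Tr}l=2f$; in particular part (i) becomes the assertion $f=0$.

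Next I would bring in $h$. Restricting $(\ref{phil})$ to $D$ (where $\phi^2=I$ and $l=fI$) gives $h^2=(f+1)I$ on $D$, and since $h\xi=0$ this reads $h^2=(f+1)\phi^2$ on $M^3$; at the level of traces this is consistent with $(\ref{Trl})$. Substituting $\phi l=f\phi$ and $h^2\phi=(f+1)\phi$ into $(\ref{nablaxih})$ yields $\nabla_\xi h=-\phi-f\phi+(f+1)\phi=0$. Consequently $\nabla_\xi(h^2)=0$; combined with $\nabla_\xi\phi=0$ from $(\ref{nablaxi})$, differentiating $h^2=(f+1)\phi^2$ along $\xi$ gives $(\xi f)\phi^2=0$, whence $\xi f=0$: the function $f$ is constant along the Reeb flow.

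To upgrade this to global constancy — which is part (ii) — I would apply the contracted second Bianchi identity $\operatorname{div}S=\tfrac12\,dr$ to $S=aI+b\,\eta\otimes\xi$. Using $(\ref{nablax})$, the facts that $\phi$ is $g$-skew while $\phi h$ is $g$-symmetric with $\operatorname{Tr}\phi=\operatorname{Tr}\phi h=0$ (so $\operatorname{div}\xi=0$ by $(\ref{hxi})$) and $\nabla_\xi\xi=0$, a short computation gives $\operatorname{div}S=da+(\xi b)\,\eta$ while $\tfrac12\,dr=\tfrac12\,d(3a+b)$; comparing these, $d(a+b)=2(\xi b)\,\eta$, i.e. $d(\operatorname{Tr}l)=2(\xi b)\,\eta$. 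Since the $\xi$-component of $d(\operatorname{Tr}l)$ vanishes by the previous step, $\xi b=0$ and hence $d(\operatorname{Tr}l)=0$; therefore $f$ is constant on $M^3$.

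It remains to identify the constant, i.e. to prove part (i) that $f=0$. By Step 1 one has $R(X,Y)\xi=f\bigl(\eta(Y)X-\eta(X)Y\bigr)$, so $\xi$ lies in the $f$-nullity distribution. I would compute $R(X,Y)\xi$ a second way from $(\ref{nablax})$: writing $\nabla_X\xi=\psi X$ with $\psi=-\phi+\phi h$, one has $R(X,Y)\xi=(\nabla_X\psi)Y-(\nabla_Y\psi)X$, which expands through $\nabla_X\phi$ and $\nabla_X h$; inserting the paracontact formula for $\nabla\phi$ together with $h^2=(f+1)\phi^2$ and $\nabla_\xi h=0$, and matching the result against $f(\eta(Y)X-\eta(X)Y)$, should pin down $f$. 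I expect this last comparison to be the main obstacle: it requires the full first covariant derivative of $\phi$, which on a general paracontact metric manifold is far less transparent than in the $K$-paracontact or para-Sasakian case, whereas Steps 1–3 use only the structure equations $(\ref{nablax})$–$(\ref{nablaxih})$ and the three-dimensional identity $(\ref{rxyz})$.
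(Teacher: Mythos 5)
Your treatment of part (ii) is essentially complete and takes a genuinely different route from the paper. The common core is the same: $S\xi=(\operatorname{Tr}l)\,\xi$, the $\eta$-Einstein form of $S$, $l=f\,\mathrm{id}$ on $D=\ker\eta$, then $h^{2}=(f+1)\phi^{2}$, $\nabla_{\xi}h=0$ and $\xi f=0$, which correspond to (\ref{Sxi}), (\ref{2.2}), (\ref{2.7}), (\ref{l}), (\ref{2.5}) and (\ref{xif}). For the global constancy, however, you invoke the contracted Bianchi identity $\operatorname{div}S=\tfrac12\,\mathrm{d}r$ applied to $S=aI+b\,\eta\otimes\xi$ (using $\operatorname{div}\xi=0$ from (\ref{hxi}) and $\nabla_{\xi}\xi=0$), getting $\mathrm{d}(a+b)=2(\xi b)\eta$ and then $\mathrm{d}(a+b)=0$ from $\xi f=0$; this checks out and is shorter than the paper's argument, which instead covariantly differentiates the nullity identity (\ref{2.13}) and runs the full second Bianchi identity (\ref{2.11}) to reach $(Xf)Y=(Yf)X$ for $X,Y$ orthogonal to $\xi$. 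Your pointwise linear-algebra step ($S|_{D}$ symmetric, commuting with $\phi$, $\phi^{2}=I$ on $D$, hence scalar) is also fine and in effect anticipates the implication (ii)$\Rightarrow$(i) of Theorem \ref{equiv}, whereas the paper reaches the $\eta$-Einstein form through $l$ via (\ref{phileq}) and (\ref{2.2}).

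The genuine gap is part (i), which you leave as a sketch and correctly flag as the main obstacle: under your reading it cannot be closed at all. Having computed the signed trace $\operatorname{Tr}l=2f$, you turn (i) into the assertion $f=0$; but $f$ is an arbitrary constant for this class of manifolds. Indeed, by Theorem \ref{equiv} the hypothesis $S\phi=\phi S$ is equivalent to $\xi$ lying in the $k$-nullity distribution with $k=f$, and for a three-dimensional $K$-paracontact (para-Sasakian) manifold one has $h=0$, $l=-\phi^{2}$, hence $f=-1$ while $S\phi=\phi S$ holds; also $f=0$ would mean $l=0$, which by the Remark after Proposition \ref{lphi} forces $M^{3}$ flat. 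So no expansion of $R(X,Y)\xi$ through $\nabla\phi$ and $\nabla h$ can pin $f$ down to $0$. What the paper actually does for (i) is different in kind: having shown $lX=fX$ for $X$ orthogonal to $\xi$, it evaluates $Trl$ in the frame $\{e_{1},e_{2},\xi\}$ with $g(e_{1},e_{1})=-1$, $g(e_{2},e_{2})=1$ as the unsigned sum $g(le_{1},e_{1})+g(le_{2},e_{2})+g(l\xi,\xi)=-f+f+0=0$ (equation (\ref{trl0})), i.e.\ (i) rests on the neutral signature of $g|_{D}$ and holds for every value of $f$; with your signed convention (the one consistent with (\ref{Trl})) the same sum is $2f$, which is exactly why your version of (i) is unprovable. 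As written, then, part (ii) is proved by a valid alternative argument, but part (i) is missing and the route you propose for it would fail.
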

\begin{proof}By virtue of equations (\ref{eta}), (\ref{phixi}), (\ref{Trl}) and $\phi S=S\phi$, we have
\begin{equation}\label{Sxi}
S\xi=(Trl)\xi.\end{equation}
From equation (\ref{rxyz}), using the definition of $l$ and (\ref{Sxi}), we have for any $X$
\begin{align}lX=& R(X,\xi)\xi\nonumber\\=&g(\xi,\xi)SX-g(X,\xi)S\xi+g(S\xi,\xi)X\nonumber \\
-&g(SX,\xi)\xi-({r}/{2})\left(g(\xi,\xi)X-g(X,\xi)\xi\right)\nonumber
\end{align}
which gives
\begin{align}\label{2.2}
 lX=SX+\left(Trl-{r}/{2}\right)X+\eta(X)\left({r}/{2}-2Trl\right)\xi
\end{align}
since, $\eta(SX)\xi=\eta(X)(Trl)\xi$.
As a matter of fact if at a point $p\in M^{3}$ there exists $X\in \Gamma(T_{p}M^{3})$ such that $lX=0$ for $X\ne \xi$, then $l=0$ at $p$. So, let us assume that $l\ne0$ on a neighbourhood of a point $p$. From (\ref{phileq}) and (\ref{hphi}), we get
\begin{eqnarray}
g(\phi X, lX)=-g(X,\phi lX)=-g(\phi X, lX)\implies g(\phi X, lX)=0.\nonumber
\end{eqnarray}
Therefore, $lX$ is parallel to $X$ for any $X$ orthogonal to $\xi$. Since, $lX=fX$ for any $X$ orthogonal to $\xi$. Taking a local orthogonal frame $\left\{e_1, e_2, \xi\right\}$ where $-g(e_1,e_1)=g(e_2,e_2)=g(\xi,\xi)=1$ and $e_1,\,e_2$ are orthogonal to $\xi$. Then, by definition
\begin{align}\label{trl0}
&Trl=\sum_{i=1}^{3} g(le_i,e_i)=g(le_1,e_1)+g(le_2,e_2)+g(l\xi,\xi) \nonumber\\
&=fg(e_1,e_1)+fg(e_2,e_2)=0 
\end{align}
which proves (i). \\
For any $X$, we can write
\begin{equation}\label{2.6}
lX=f\phi^{2}X.
\end{equation}
From (\ref{2.6}), (\ref{nablaxi}) and (\ref{xitrl}), we have
\begin{align}\label{xif}
\xi f=0.
\end{align}
With the help of equations (\ref{2.6}) and (\ref{2.2}), we find
\begin{align}
lX=SX+\left(Trl-{r/}{2}\right)X+\eta(X)\left({r}/{2}-2Trl\right)\xi 
\nonumber
\end{align} 
which gives
\begin{align}\label{2.7}
SX=aX+b\eta(X)\xi
\end{align}
where 
\begin{eqnarray}\label{2.8}
a=(f-Trl+r/2),&& b=(2Trl-{r/}{2}-f).\end{eqnarray}
From the second identity of Bianchi, we get
\begin{equation} \label{2.11} 
\left(\nabla _{X} R\right)\left(Y,\xi ,Z\right)+\left(\nabla _{Y} R\right)\left(\xi ,X,Z\right)+\left(\nabla _{\xi } R\right)\left(X,Y,Z\right)=0. 
\end{equation} 
Employing (\ref{2.7}) in (\ref{rxyz}), we have
\begin{align} 
R\left(X,Y\right)Z= &g\left(Y,Z\right)\left(aX+b\eta \left(X\right)\xi \right)-g\left(X,Z\right)\left(aY+b\eta \left(Y\right)\xi \right)\nonumber \\
+& g\left(aY+b\eta \left(Y\right)\xi ,Z\right)X-g\left(aX+b\eta \left(X\right)\xi ,Z\right)Y\nonumber\\
-&({r}/{2}) \left(g\left(Y,Z\right)X-g\left(X,Z\right)Y\right) \nonumber \\
=& ag\left(Y,Z\right)X+b\eta \left(X\right)g\left(Y,Z\right)\xi -ag\left(X,Z\right)Y-b\eta \left(Y\right)g\left(X,Z\right)\xi \nonumber \\
+& ag\left(Y,Z\right)X+ b\eta \left(Y\right)g\left(\xi ,Z\right)X-ag\left(X,Z\right)Y-b\eta \left(X\right)g\left(\xi ,Z\right)Y  \nonumber \\
-&({r}/{2})(g\left(Y,Z\right)X-g\left(X,Z\right)Y) \nonumber \\
=& \left\{\left(2a-{r}/{2} \right)g\left(Y,Z\right)X+b\eta \left(Y\right)\eta \left(Z\right)X\right\} \nonumber \\
-&\left\{\left(2a-{r}/{2} \right)g\left(X,Z\right)Y+b\eta \left(X\right)\eta \left(Z\right)Y\right\} \nonumber \\
+& b\eta \left(X\right)g\left(Y,Z\right)\xi -b\eta \left(Y\right)g\left(X,Z\right)\xi \nonumber
\end{align} 
that is,
\begin{eqnarray} \label{2.12} 
R\left(X,Y\right)Z &=&\left\{\left(\gamma g\left(Y,Z\right)+ b\eta \left(Y\right)\eta \left(Z\right)\right)X-\left(\gamma g\left(X,Z\right)+b\eta \left(X\right)\eta \left(Z\right)\right)Y\right\} \nonumber\\ & +& b\left\{\eta \left(X\right)g\left(Y,Z\right)-\eta \left(Y\right)g\left(X,Z\right)\right\}\xi  
\end{eqnarray}
where $\gamma =2a-{r}/{2}$. From (\ref{2.12}) for $Z=\xi$, we get 
\begin{align}
R(X, Y)\xi =& \left\{\gamma \eta \left(Y\right)+b\eta \left(Y\right)\right\}X-\left\{\gamma \eta \left(X\right)+b\eta \left(X\right)\right\}Y\nonumber\\
+&b\left\{\eta \left(X\right)\eta \left(Y\right)-\eta \left(Y\right)\eta \left(X\right)\right\}\xi \nonumber \\ 
=& \left(\gamma +b\right)\left(\eta \left(Y\right)X-\eta \left(X\right)Y\right)\nonumber
\end{align} 
switching the values of $b$ and $\gamma$ in the above equation, we have
\begin{align}\label{2.13}
R(X,Y)\xi=f\left(\eta \left(Y\right)X-\eta \left(X\right)Y\right).
\end{align}
Using (\ref{2.13}), we obtain
\begin{align}R&\left(Y,\xi \right)\xi =f\left(Y-\eta \left(Y\right)\xi \right),\tag{A}
\\                                                                             
 R&\left(\nabla _{X} Y,\xi \right)\xi =f\left(\nabla _{X} Y-\eta \left(\nabla _{X} Y\right)\xi \right),\tag{B}
\\
R&\left(Y,\nabla _{X} \xi \right)\xi =f \left(\eta \left(\nabla _{X} \xi \right)Y-\eta \left(Y\right)\nabla _{X} \xi \right),\tag{C}
\\
 R&\left(Y,\xi \right)\nabla _{X} \xi =\left\{\gamma \eta \left(\nabla _{X} \xi \right)+b\eta \left(\nabla _{X} \xi \right)\right\}Y-\left\{\gamma g\left(Y,\nabla _{X} \xi \right)+b\eta \left(Y\right)\eta \left(\nabla _{X} \xi \right)\right\}\xi\nonumber
\\ &\hspace{2cm}+b\left\{\eta \left(Y\right)\eta \left(\nabla _{X} \xi \right)-g\left(Y,\nabla _{X} \xi \right)\right\}\xi. \tag{D}
\end{align}
From (A), we also have 
\begin{align}
&\left(\nabla _{X} R\right)\left(Y,\xi ,\xi \right)+ R\left(\nabla _{X} Y,\xi \right)\xi+R\left(Y,\nabla _{X} \xi \right)\xi+R\left(Y,\xi \right)\nabla _{X} \xi \nonumber \\
&=\left(Xf\right)\left(Y-\eta \left(Y\right)\xi \right)+f\bigl\{\nabla _{X} Y-\left(\left(\nabla _{X}\eta\right)Y\right)\xi-\eta\left(\nabla _{X} Y\right)\xi -\eta \left(Y\right)\nabla _{X}\xi \bigr\}.\tag{E}
\end{align}
\noindent Hence for $X,\,Y$ orthogonal to $\xi $, we get from (B), (C), (D) and (E)
\[\left(\nabla _{X} R\right)\left(Y,\xi ,\xi \right)+f \left(\nabla _{X} Y\right)=\left(Xf\right)Y+f\left(\nabla _{X} Y\right)\] 
this implies that
\begin{align}
\left(\nabla _{X} R\right)\left(Y,\xi ,\xi \right)=\left(Xf\right)Y. \tag{F}
\end{align}
Applying $Z=\xi $ in equation (\ref{2.11}), we get
\begin{align}\left(\nabla _{X} R\right)\left(Y,\xi ,\xi \right)+\left(\nabla _{Y} R\right)\left(\xi ,X,\xi \right)+\left(\nabla _{\xi } R\right)\left(X,Y,\xi \right)=0. \tag{G}
\end{align}
\noindent  From (\ref{2.13}), we can write
\begin{align}\label{2.14}
&{}\left(\nabla _{\xi } R\right) \left(X,Y,\xi \right)+ R\left(\nabla _{\xi } X,Y\right){\xi}+R\left(X,\nabla _{\xi } Y\right){\xi}+R\left(X,Y \right)\nabla _{\xi } \xi \nonumber\\
&=\left(\xi f\right)\bigl\{\eta \left(Y\right)X-\eta \left(X\right)Y\bigr\}+f\bigl\{((\nabla_{\xi}\eta)Y) X-((\nabla_{\xi}\eta)X)Y\bigr\}\nonumber \\ 
&+f \bigl\{\eta (\nabla _{\xi } Y)X+\eta (Y)\nabla _{\xi } X-\eta(\nabla _{\xi } X)Y-\eta(X)\nabla _{\xi } Y\bigr\}.
\end{align}
Using (\ref{nablax}) and (\ref{xif}) in (\ref{2.14}), we get
\begin{align}
&\left(\nabla _{\xi } R\right)\left(X,Y,\xi \right)+ R\left(\nabla _{\xi } X,Y,\xi \right)+R\left(X,\nabla _{\xi } Y,\xi \right)                                                                               \nonumber\\ 
&=f \bigl\{\eta \left(Y\right)\nabla _{\xi } X-\eta \left(\nabla _{\xi } X\right)Y +\eta \left(\nabla _{\xi } Y\right)X-\eta \left(X\right)\nabla_{\xi} Y\bigr\}\nonumber
\\
&+f\bigl\{\left(\left(\nabla _{\xi } \eta \right)Y\right)X-\left(\left(\nabla _{\xi } \eta \right)X\right)Y\bigr\} \nonumber
\\
&= R\left(\nabla _{\xi } X,Y,\xi \right)+R\left(X,\nabla _{\xi } Y,\xi \right) +f\left(g\left(Y,\nabla _{\xi } \xi \right)X-g\left(X,\nabla _{\xi } \xi \right)Y\right)\nonumber
\end{align}
that is
\begin{align}\left(\nabla _{\xi } R\right)\left(X,Y,\xi \right)=0.\tag{H}\end{align}

\noindent From (H) and (G), we find that $\left(\nabla _{X} R\right)\left(Y,\xi ,\xi \right)=\left(\nabla _{Y} R\right)\left(X,\xi ,\xi \right)$ and by the use of (F) this implies, $\left(Xf\right)Y= \left(Yf\right)X$, for $X$ and $Y$ orthogonal to $\xi $. Therefore $Xf=0$, for $X$ orthogonal to $\xi $, but $\xi f=0$, so the function $f$ is constant everywhere on $M^3$ and we reached at the end of the proof.
\end{proof}

\begin{prop}\label{lphi}
Let $M^3(\phi, \xi, \eta, g)$ be a paracontact metric manifold with $S\phi=\phi S$, then we have

\begin{enumerate}
\item[\rm (i)]\hspace{1cm} $l\phi=\phi l$.

\item[\rm (ii)]\hspace{1cm} $l=h^{2}-\phi ^{2}$.

\item[\rm (iii)]\hspace{1cm} $\xi\,Trl=0$.
\end{enumerate}
\end{prop}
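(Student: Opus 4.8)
The plan is to obtain all three items as essentially immediate consequences of Theorem~\ref{trl}, combined with the universal identities~(\ref{phil}) and~(\ref{nablaxih}) and the elementary algebra of $\phi$. The one structural fact I will use everywhere is the conclusion of Theorem~\ref{trl}: on $M^3$ the operator $l$ is a constant multiple of the projection onto the contact distribution, i.e., $lX=f\phi^2X$ for every $X$ (this is~(\ref{2.6})), with $f$ a globally constant function, and $Trl=0$.

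For (i), the quickest route is to note that $\phi$ preserves the splitting of $\Gamma(TM^3)$ into the vectors orthogonal to $\xi$ and the span of $\xi$, because $\eta\circ\phi=0$ and $\phi\xi=0$ by~(\ref{phixi}). Hence for $X$ orthogonal to $\xi$ one has $l\phi X=f\phi X=\phi(lX)$, while on $\xi$ both $l\phi\xi$ and $\phi l\xi$ vanish by~(\ref{hxi}); linearity then gives $l\phi=\phi l$ on the whole tangent bundle. Equivalently, one may argue directly from~(\ref{2.6}): $l\phi X=f\phi^2(\phi X)=f\phi^3X=\phi(f\phi^2X)=\phi lX$, using $\phi^3=\phi$, which is immediate from~(\ref{eta}) and $\phi\xi=0$.

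For (ii), I would feed (i) into the universal identity~(\ref{phil}), namely $\phi l\phi+l=2(h^2-\phi^2)$. Commuting $\phi$ past $l$ turns $\phi l\phi$ into $\phi^2l$, and then~(\ref{2.6}) together with $\phi^4=\phi^2$ (immediate once $\phi^3=\phi$ is known) collapses $\phi^2l=f\phi^4=f\phi^2=l$. So~(\ref{phil}) reduces to $2l=2(h^2-\phi^2)$, which is exactly~(ii).

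For (iii), Theorem~\ref{trl}(i) already asserts $Trl\equiv0$ on $M^3$, and the derivative of a constant function along any vector field is zero; this settles it in one line. If one prefers an argument that does not quote part~(i) of the theorem, substitute the formula of~(ii) into~(\ref{nablaxih}): using $\phi h^2=h^2\phi$ and $\phi^3=\phi$ (both consequences of~(\ref{hphi}) and the $\phi$-identities), the right-hand side telescopes to zero, so $\nabla_\xi h=0$, whence $\xi(Tr\,h^2)=Tr(\nabla_\xi(h^2))=0$ and~(\ref{Trl}) gives $\xi(Trl)=0$. I do not anticipate a genuine obstacle here: the real work was done in Theorem~\ref{trl}, which is what trivialises both the commutation and the identification with $h^2-\phi^2$. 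The only point that deserves a moment's care is the bookkeeping with the powers $\phi^2,\phi^3,\phi^4$ and the idempotency of $\phi^2$, since the indefinite metric does not simplify these expressions the way the Riemannian contact case would — but this is entirely routine.
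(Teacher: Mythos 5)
Your argument is circular relative to the logical structure of this paper, and the circularity is not repairable by reordering: everything you use---the identity $lX=f\phi^{2}X$ of (\ref{2.6}), the constancy of $f$, and $Trl=0$---is obtained in the proof of Theorem \ref{trl} only \emph{after} the contents of Proposition \ref{lphi} have been established. Concretely, the step in Theorem \ref{trl} showing that $lX$ is parallel to $X$ for $X$ orthogonal to $\xi$ (which is what produces $lX=fX$ and then (\ref{2.6}) and $Trl=0$) rests on $g(\phi X,lX)=0$, and that is deduced from (\ref{phileq}), i.e.\ from part (i) of the very proposition you are proving; likewise the constancy of $f$ needs $\xi f=0$, which is deduced from (\ref{xitrl}), i.e.\ from part (iii). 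So both of your routes to (i) (the eigenvalue argument on $D$ and the computation $l\phi X=f\phi^{3}X$), your reduction $\phi^{2}l=f\phi^{4}=l$ in (ii), and your one-line argument for (iii) via $Trl\equiv 0$ all presuppose statements that, in this paper, are downstream of the proposition itself. There is no independent proof of Theorem \ref{trl} available here that bypasses the proposition, so quoting it is not legitimate.

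The intended (and easy) fix is to work directly from (\ref{2.2}), which needs only (\ref{Sxi}) and (\ref{rxyz}) and is independent of the proposition: applying $\phi$ to (\ref{2.2}) on either side and using $S\phi=\phi S$, $\eta\circ\phi=0$ and $\phi\xi=0$ gives $l\phi=\phi l$ at once, which is (i). For (ii) you do not need (\ref{2.6}) to collapse $\phi l\phi$: commuting gives $\phi l\phi=l\phi^{2}=l-\eta(l\,\cdot)\xi=l$, since $l\xi=0$ and $l$ is symmetric, so (\ref{phil}) yields $2l=2(h^{2}-\phi^{2})$. For (iii), your fallback argument is essentially the paper's: substitute (ii) into (\ref{nablaxih}) and use $\phi h=-h\phi$, $\phi^{3}=\phi$ to get $\nabla_{\xi}h=0$, then differentiate $l=h^{2}-\phi^{2}$ along $\xi$ using (\ref{nablaxi}) to conclude $\nabla_{\xi}l=0$ and hence $\xi\,Trl=0$; that part is sound once (ii) has been obtained without appeal to Theorem \ref{trl}.
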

\begin{proof}
 $S\phi=\phi S$ and (\ref{2.2}) yields
\begin{equation}\label{phileq}
\phi l=l\phi
\end{equation}
Further using (\ref{phileq}), (\ref{phil}) and (\ref{nablaxih}), we obtain that
\begin{equation}\label{l}
l=h^{2}-\phi^{2}
\end{equation}
and
\begin{eqnarray}
\nabla_{\xi}h &=&-\phi+h^{2}\phi-\phi l\nonumber\\ 
&=&-\phi +h^{2}\phi -\phi(h^{2}-\phi^{2})\nonumber\\ &=&-\phi + h^{2}\phi-\phi hh+\phi^{3} \nonumber \\&=& h^{2}\phi +h\phi h \nonumber
\end{eqnarray}
that is,
\begin{align}\label{2.5}
\nabla_{\xi}h=0.
\end{align}
We now differentiate equation (\ref{l}) with respect to $\xi$ and use equations (\ref{nablaxi}) and (\ref{2.5}), and find that 
\begin{align}\label{xitrl}
\nabla_{\xi}l=0 \implies \xi\,Trl=0.
\end{align}
This ends the proof.
\end{proof}

\begin{remark} Taking $ l=0$ everywhere, and using (\ref{rxyz}), (\ref{Sxi}) and (\ref{2.2}), we get \\$R\left(X,Y\right)\xi =0$. This together with the theorem 3.3 of \cite{szvt} gives that $M^{3}$ is flat.\end{remark}

\begin{theorem}\label{equiv} If $M^{3}(\phi ,\xi ,\eta ,g) $ be a paracontact metric manifold, then the following conditions are equivalent:

\begin{enumerate}
\item[\rm (i)]\hspace{1cm}  $M^{3} $ is $\eta$-Einstein.

\item[\rm (ii)] \hspace{1cm} $S\phi =\phi S$.

\item[\rm (iii)]\hspace{1cm} $\xi$ belongs to the $k$-nullity distribution.
\end{enumerate}
\end{theorem}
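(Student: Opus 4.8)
The plan is to prove the equivalence by a cyclic chain of implications, (i)$\,\Rightarrow\,$(ii)$\,\Rightarrow\,$(iii)$\,\Rightarrow\,$(i), reusing the identities already obtained in Theorem~\ref{trl} and Proposition~\ref{lphi}. The implication (i)$\,\Rightarrow\,$(ii) I would dispatch first and directly: if $S=aI+b\,\eta\otimes\xi$ then, by $\phi\xi=0$ and $\eta\circ\phi=0$ from (\ref{phixi}), one gets $\phi SX=a\phi X+b\,\eta(X)\phi\xi=a\phi X$ and $S\phi X=a\phi X+b\,\eta(\phi X)\xi=a\phi X$ for every $X$, so $S\phi=\phi S$.

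For (ii)$\,\Rightarrow\,$(iii) I would invoke Theorem~\ref{trl}: under $S\phi=\phi S$ the function $f$ with $lX=fX$ on $\xi^{\perp}$ is a global constant, and the proof of that theorem already establishes equation (\ref{2.13}), namely $R(X,Y)\xi=f(\eta(Y)X-\eta(X)Y)$. Matching this against (\ref{rxyxi}) and the definition (\ref{nk}) shows $\xi\in N(f)$; the role of the constancy of $f$ here is precisely to guarantee that $k=f$ is an admissible real number in (\ref{nk}).

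The substantive part is (iii)$\,\Rightarrow\,$(i), and this is where the one genuine computation sits. Starting from $R(X,Y)\xi=k(\eta(Y)X-\eta(X)Y)$, I would set $Z=\xi$ in the three-dimensional curvature formula (\ref{rxyz}), obtaining $R(X,Y)\xi=\eta(Y)SX-\eta(X)SY+\eta(SY)X-\eta(SX)Y-(r/2)(\eta(Y)X-\eta(X)Y)$ after using $\eta(W)=g(W,\xi)$ and the symmetry of $S$, and then compare the two expressions for $R(X,Y)\xi$. Restricting first to $X,Y$ orthogonal to $\xi$ kills the $\eta(X),\eta(Y)$ terms and leaves $\eta(SY)X=\eta(SX)Y$; since $\dim\xi^{\perp}=2$ I can pick $X,Y$ independent, so $\eta(SX)=g(S\xi,X)=0$ for all $X\perp\xi$, i.e. $S\xi=(Trl)\xi$ by (\ref{Trl}). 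Then, taking $Y=\xi$ in the same identity and substituting $S\xi=(Trl)\xi$ together with $\eta(SX)=(Trl)\eta(X)$, collecting terms gives $SX=aX+b\,\eta(X)\xi$ with $a=k-Trl+r/2$ and $b=2Trl-k-r/2$, which is the $\eta$-Einstein condition.

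I anticipate no real obstacle: the only work is the bookkeeping of scalar coefficients in the last implication, and everything structural—the constancy of $f$ from Theorem~\ref{trl}, and the fact that in dimension three the whole curvature tensor is determined by the Ricci tensor via (\ref{rxyz})—is already in hand. The single point to be careful with is the logical ordering, namely that (ii)$\,\Rightarrow\,$(iii) genuinely uses the constancy of $f$ (not merely the pointwise relation $lX=fX$), so that $k$ is a well-defined constant as required by the notion of a $k$-nullity distribution.
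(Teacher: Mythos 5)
Your proposal is correct and follows essentially the same route as the paper: (i)$\Rightarrow$(ii) by the direct computation using $\phi\xi=0$ and $\eta\circ\phi=0$, (ii)$\Rightarrow$(iii) by quoting Theorem \ref{trl} and equation (\ref{2.13}) so that $k=f$ is a genuine constant, and (iii)$\Rightarrow$(i) by putting $Z=\xi$ in (\ref{rxyz}) and comparing with the nullity condition. The only (harmless) deviation is in (iii)$\Rightarrow$(i), where you deduce $S\xi=(Trl)\xi$ from the comparison restricted to $X,Y\perp\xi$ instead of contracting $R(X,Y)\xi=k(\eta(Y)X-\eta(X)Y)$ as the paper does, and your coefficients $a=k-Trl+r/2$, $b=2Trl-k-r/2$ (which become $a=r/2-k$, $b=3k-r/2$ once one notes $Trl=2k$) are in fact consistent with $S\xi=2k\xi$, whereas the paper's stated value $b=k-r/2$ contains a small slip.
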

\begin{proof}

\noindent \textbf{(i)$\implies$(ii).}

\noindent Let $M^{3}$ be $\eta $-Einstein that is, $S=aI+b\eta \otimes \xi$. Therefore $S\phi =a\phi +b\left(\eta \circ \phi \right)\otimes \xi =a\phi$. Also, $\phi S=a\phi +b\eta \otimes \phi \xi =a\phi$. Hence $S\phi =\phi S$. 

\noindent \textbf{(ii)$\implies$(iii).}

\noindent Let $S\phi =\phi S$. Then, we have from (\ref{2.13}) \[R\left(X,Y\right)\xi =f\left(\eta \left(Y\right)X-\eta \left(X\right)Y\right).\]
By the theorem \ref{trl} we have, $f={\rm constan}t=k$ (say), therefore,
\[R\left(X,Y\right)\xi =k\left(\eta \left(Y\right)X-\eta \left(X\right)Y\right)\] this implies that $\xi$ belongs to the $k$-nullity distribution.

\noindent \textbf{(iii)$\implies$(i).}

\noindent Let $\xi$ belongs to the $k$-nullity distribution. Then,
\begin{equation} \label{2.16} 
R\left(X,Y\right)\xi =k\left(\eta \left(Y\right)X-\eta \left(X\right)Y\right) 
\end{equation} 
where $k$ is a constant.

\noindent Contracting (\ref{2.16}) with respect to $X$, we have
\[Ric\left(Y,\xi \right)=k\left(3\eta \left(Y\right)-\eta \left(Y\right)\right)=k\left(2\eta \left(Y\right)\right)=2k\eta \left(Y\right)\] 
that is,
\[{ S\xi =2k\xi } \] 
and so from (\ref{rxyz}), we find
\begin{equation} \label{2.17} 
R\left(X,Y\right)\xi =\eta \left(Y\right)SX-\eta \left(X\right)SY+\left(2k-{r}/{2} \right)\left(\eta \left(Y\right)X-\eta \left(X\right)Y\right). 
\end{equation} 
Comparing (\ref{2.16}) and (\ref{2.17}), we get
\[\eta \left(Y\right)\left\{SX+\left(k-{r}/{2} \right)X\right\}=\eta \left(X\right)\left\{SY+\left(k-{r}/{2} \right)Y\right\}.\] 
Taking $Y$ orthogonal to $\xi $ and $X=\xi $, we have
\[ SY=\left({r}/{2} -k\right)Y\] 
and so for any $Z$,
\[SZ=\left({r}/{2} -k\right)Z+\left(k-{r}/{2} \right)\eta \left(Z\right)\xi .\] 
This implies that $S=aI+b\eta \otimes \xi $, where $a={r}/{2} -k$ and $b=k-{r}/{2}. $ \\
Therefore $M^{3}$ is $\eta $-Einstein. This completes the proof of the theorem.\end{proof}
\medskip
\section{Torse forming vector fields}
\begin{definition}
A vector field $U$ defined by $g(X,U)=u(X)$ for any $X\in\Gamma (TM^3)$ is said to be torse forming vector field \cite{yk} (see also \cite{ma, js}) if
\begin{align}\label{torse}
(\nabla_{X}u)(Y)=sg(X,Y)+\alpha (X)u(Y),
\end{align}
where $s$ and $\alpha$ are called the {\it conformal scalar} and the {\it generating form} of $U$, respectively.

A torse forming vector field $U$ is called
\begin{itemize}
\item[$\bullet$] {\it recurrent} or {\it parallel}, if $s=0$,
\item[$\bullet$] {\it concircular}, if the generating form $\alpha$ is a gradient,
\item[$\bullet$] {\it convergent}, if it is concircular and $s={\rm const.\,exp}(\alpha)$.
\end{itemize}
\end{definition}

\begin{theorem}
Let $M^{3}(\phi ,\xi ,\eta ,g) $ be a paracontact metric manifold with $S\phi =\phi S$. Then the unit torse forming vector field in $M^3$ is concircular.
\end{theorem}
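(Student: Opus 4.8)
The plan is to unwind the definition of the torse-forming field $U$, use the normalization $g(U,U)=\pm1$ to force the generating form $\alpha$ to be a functional multiple of the metric dual $u$ of $U$, and then deduce from the curvature of $M^{3}$ that $d\alpha=0$; since a closed $1$-form is locally the differential of a function, this is precisely concircularity in the sense of the definition.

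First I would rewrite $(\nabla_X u)(Y)=s\,g(X,Y)+\alpha(X)u(Y)$ as $\nabla_X U=sX+\alpha(X)U$. Differentiating the constant $g(U,U)=\epsilon$, with $\epsilon:=g(U,U)=\pm1$, along an arbitrary $X$ gives $0=2\bigl(s\,u(X)+\epsilon\,\alpha(X)\bigr)$, hence $\alpha=-\epsilon\,s\,u$. Substituting this back and antisymmetrizing the torse-forming identity yields $du(X,Y)=\alpha(X)u(Y)-\alpha(Y)u(X)=0$, so $u$ is already closed; and a one-line computation with $\alpha=-\epsilon s\,u$ and $du=0$ gives $d\alpha=-\epsilon\,ds\wedge u$. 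Thus $U$ is concircular exactly when $ds\wedge u=0$, equivalently $Xs=0$ for every $X$ orthogonal to $U$.

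To obtain $ds\wedge u=0$ I would compute $R(\cdot,\cdot)U$ in two ways. From $\nabla_X U=sX+\alpha(X)U$ and the Ricci identity,
\[
R(X,Y)U=\bigl[(Xs)+\epsilon s^{2}u(X)\bigr]Y-\bigl[(Ys)+\epsilon s^{2}u(Y)\bigr]X+d\alpha(X,Y)\,U,
\]
which for $X,Y\perp U$ reduces to $R(X,Y)U=(Xs)Y-(Ys)X+d\alpha(X,Y)U$. On the other hand, $S\phi=\phi S$ makes the curvature of $M^{3}$ extremely rigid: by Theorem \ref{equiv} the manifold is $\eta$-Einstein, while by Theorem \ref{trl} and (\ref{2.13}) the function $f$ is constant with $R(X,Y)\xi=f(\eta(Y)X-\eta(X)Y)$; combining $S\xi=(Trl)\xi=0$ (by (\ref{Sxi}) and Theorem \ref{trl}) with the relation $S\xi=2f\xi$ coming from $\xi$ lying in the $f$-nullity distribution forces $f=0$, hence $l=0$, hence (by the Remark) $M^{3}$ is flat, so $R(X,Y)U=0$. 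Comparing the two expressions on a pair of independent vectors $X,Y\perp U$---for which $\{X,Y,U\}$ is linearly independent since $U$ is non-null---makes all coefficients vanish: $Xs=0$ and $d\alpha(X,Y)=0$. As $U^{\perp}$ is $2$-dimensional this gives $Xs=0$ for every $X\perp U$, i.e.\ $ds=\lambda u$ locally; hence $d\alpha=-\epsilon\lambda\,u\wedge u=0$ and $U$ is concircular.

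I expect the genuinely load-bearing step to be pinning down the curvature of $M^{3}$ sharply enough to conclude that the $X,Y$-part of $R(\cdot,\cdot)U$ is a multiple of $u$; once $R(\cdot,\cdot)U$ is under control the remainder is linear algebra. If one prefers not to route through flatness, the same output can be read directly off the $\eta$-Einstein curvature formula (\ref{2.12}) with $Z=U$ and $X,Y\perp U$, at the cost of a short case split by the position of $U$ relative to $\xi$ (according as $\eta(U)=0$, or $U$ is proportional to $\xi$, or the coefficient $b$ of the $\eta$-Einstein decomposition vanishes); the $\xi$-component of that comparison is where the bookkeeping concentrates. Throughout, one should keep the argument insensitive to whether $U$ is spacelike or timelike, i.e.\ to the sign $\epsilon$.
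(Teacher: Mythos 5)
Your opening reductions are fine and run parallel to the paper's \eqref{5.2}--\eqref{5.6}: normalizing, deducing $\alpha=-\epsilon s\,u$ from $g(U,U)=\epsilon$ (the paper's \eqref{5.4}), observing that the dual form is closed, computing $d\alpha=-\epsilon\,ds\wedge u$, and expressing $R(X,Y)U$ through the Ricci identity. The genuine gap is the load-bearing step ``$f=0$, hence $l=0$, hence $M^{3}$ is flat, so $R(X,Y)U=0$.'' Contracting \eqref{2.13} indeed gives $S\xi=2f\xi$, but this cannot be played off against $S\xi=(Trl)\xi$ to kill $f$: the trace of $l$ in a pseudo-orthonormal frame is $\sum_i\epsilon_i\,g(le_i,e_i)$ with $\epsilon_i=g(e_i,e_i)$, which equals $2f$ when $l=f\,I$ on $\ker\eta$, so the two relations are consistent for every constant $f$; the vanishing asserted in Theorem~\ref{trl}(i) comes from the signature-blind sum in \eqref{trl0} and cannot bear the weight you put on it. If your deduction were valid, every paracontact metric $3$-manifold with $S\phi=\phi S$ would be flat, which is false (three-dimensional para-Sasakian, i.e.\ $K$-paracontact, manifolds satisfy $S\phi=\phi S$ with $f=-1$ and are not flat) and would render Theorem~\ref{equiv} and the very statement being proved vacuous; note also that the paper's own step \eqref{5.9} tacitly uses $g(S\xi,\xi)=2f$, not $0$. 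So the reduction to $R(X,Y)U=0$ collapses, and with it your main line of argument.

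Your fallback---comparing the Ricci-identity expression for $R(\cdot,\cdot)U$ with the curvature \eqref{2.12}/\eqref{2.13} and splitting cases according to the position of $\widehat{U}$ relative to $\xi$---is in substance the paper's actual proof: the paper equates its \eqref{5.6} with \eqref{2.13}, obtains the dichotomy (I)/(II), and in each case concludes $d\alpha=-\mu\,dV=0$. But in your sketch exactly the nontrivial part is deferred: for $X,Y\perp U$ the comparison only yields $d\alpha(X,Y)=0$, whereas closedness of the generating form also requires $d\alpha(X,\widehat{U})=0$, i.e.\ $Xs=0$ for $X\perp U$, and controlling that component (equivalently the $\xi$-direction bookkeeping behind the paper's cases (I) and (II), including the degenerate case $\eta(\widehat{U})=\pm1$) is where all the work lies. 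As written, that part is announced rather than proved, so the proposal does not yet establish the theorem; the flatness shortcut must be removed and the case analysis carried out in full.
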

\begin{proof}
For a unit torse forming vector field $\widehat{U}$ corresponding to U, if we define $g(X,\widehat{U})=V(X)$, then
\begin{align}\label{5.2}
V(X)=u(X)/{\sqrt{u(U)}}. 
\end{align}
From (\ref{torse}) and (\ref{5.2}), we have
\begin{align}\label{5.3}
(\nabla_{X}V)(Y)=\mu g(X,Y)+\alpha(X)V(Y)
\end{align}
where $\mu=s/{\sqrt{u(U)}}$. Using $Y=\widehat{U}$ and $V(\widehat{U})=1$,  equation (\ref{5.3}) gives
\begin{align}\label{5.4}
\alpha(X)=-\mu V(X)
\end{align}
and hence (\ref{5.3}) can be expressed in the following form
\begin{align}\label{5.5}
(\nabla_{X}V)(Y)=\mu\bigl\{g(X,Y)-V(X)V(Y)\bigr\}
\end{align}
which shows that $V$ is closed. Now differentiating (\ref{5.5}) covariantly and using the Ricci identity, we obtain
\begin{align}\label{5.6}
V(R(X,Y)Z)=&(Y\mu)\bigl\{g(X,Z)-V(X)V(Z)\bigr\}-(X\mu)\bigl\{g(Y,Z)-V(Y)V(Z)\bigr\}\nonumber \\
+&\mu^{2}\bigl\{g(X,Z)V(Y)-g(Y,Z)V(X)\bigr\}.
\end{align}
By the use of theorem \ref{trl}, equations (\ref{2.13}), (\ref{gx}) and (\ref{5.2}), we have from (\ref{5.6})
\begin{align}
\bigl\{\eta(X)-\eta(\widehat{U})V(X)\bigr\}\bigl\{f+(\widehat{U}\mu) +\mu^2\bigr\}=0
\end{align}
which gives 
\begin{align}\label{I}
\bigl\{\eta(X)-\eta(\widehat{U})V(X)\bigr\}=0\tag{I}
\end{align}
or
\begin{align}\label{II}
\bigl\{f+(\widehat{U}\mu) +\mu^2\bigr\}=0.\tag{II}
\end{align}
If (\ref{I}) holds, then putting $X=\xi$ in (\ref{I}), we have $\eta (\widehat{U})=\pm 1$. This implies that
\begin{align}\label{5.7}
\eta(X)=\pm V(X)
\end{align}
From (\ref{nablax}), (\ref{5.5}) and (\ref{5.7}), we have $\mu=\pm A$(constant). Therefore $\alpha (X)=\mp A V(X)$. Hence the vector field $\widehat{U}$ is concircular.\\
If (\ref{II}) holds, then $\bigl\{\eta(X)-\eta(\widehat{U})V(X)\bigr\}\ne 0$. From (\ref{5.6}), we have
\begin{align}\label{5.8}
-g(SX, \widehat{U})=(X\mu)+(\widehat{U}\mu)V(X)+2\mu^{2}V(X).
\end{align}
Putting $X=\xi$ and using (\ref{Sxi}) in (\ref{5.8}), we have
\begin{align}\label{5.9}
\xi\mu=-(\mu^{2} + f)\eta(\widehat{U}).
\end{align}
In view of (\ref{5.9}) and $V(\xi)=\eta(\widehat{U})$ equation (\ref{5.6}) yields for $Y=Z=\xi$,
\begin{align}\label{5.10}
X\mu=-(\mu^{2} +f)V(X).
\end{align}
From (\ref{5.4}) and (\ref{5.10}), we get
\begin{align}\label{5.11}
Y\alpha(X)=(\mu^{2} +f)V(X)V(Y)-\mu(YV(X)).\tag{a}
\end{align}
We can also obtain
\begin{align}\label{5.12}
X\alpha(Y)=(\mu^{2} +f)V(Y)V(X)-\mu(XV(Y))\tag{b}
\end{align}
and
\begin{align}\label{5.13}
\alpha([X,Y])=-\mu V([X,Y]).
\end{align}
From (\ref{5.11}), (\ref{5.12}) and (\ref{5.13}), we have
\begin{align}
d\alpha(X,Y)=-\mu ((dV)(X,Y))\nonumber
\end{align}
which implies that  $\widehat{U}$ is concircular. These completes the proof of the theorem.
\end{proof}
\acknowledgement The authors would like to express their gratitude to D. E. Blair and R. Sharma for helpful comments and remarks in preparing this article.

\end{document}